\def\titlerunning#1{\gdef\titrun{#1}}
\def\author#1{\gdef\autrun{\def\and{\unskip, }#1}\gdef\@author{#1}}
\def\address#1{{\def\and{\\\hspace*{18pt}}\renewcommand{\thefootnote}{}%
\footnote {#1}}%
\markboth{\autrun}{\titrun}}
\def\email#1{e-mail: #1}
\def\subjclass#1{{\renewcommand{\thefootnote}{}%
\footnote{\emph{Mathematics Subject Classification (2010):} #1}}}
\def\keywords#1{\par\medskip
\noindent\textbf{Keywords.} #1}
\newtheorem{theorem}{Theorem}[section]
\newtheorem{lemma}[theorem]{Lemma}
\newtheorem{corollary}[theorem]{Corollary}
\newtheorem{example}[theorem]{Example}
\newcommand{\R}{\mathbb{R}}
\newcommand{\Proof}{\begin{proof}}
\newcommand{\End}{\end{proof}}
\numberwithin{equation}{section}
\newcommand{\PreserveBackslash}[1]{\let\temp=\\#1\let\\=\temp}
\newcolumntype{C}[1]{>{\PreserveBackslash\centering}p{#1}}
\newcolumntype{R}[1]{>{\PreserveBackslash\raggedleft}p{#1}}
\newcolumntype{L}[1]{>{\PreserveBackslash\raggedright}p{#1}}
\newcolumntype{I}{!{\vrule width 1pt}}
\newlength\savedwidth
\begin{document}

%%%%% To ease editing, add:

\baselineskip=15pt

%%%%%%%%%%%%%%%%

%% In the running head, give an abbreviation of the title.
\titlerunning{Weak KAM solutions of Hamilton-Jacobi equations}

\title{Finite-time convergence of solutions of Hamilton-Jacobi equations}

\author{Kaizhi Wang\dag \and Jun Yan\ddag\and Kai Zhao*}

\date{\today}

\maketitle

\address{Kaizhi Wang: School of Mathematical Sciences, Shanghai Jiao Tong University, Shanghai 200240, China; \email{kzwang@sjtu.edu.cn}
\and Jun Yan: School of Mathematical Sciences, Fudan University, Shanghai 200433, China;
\email{yanjun@fudan.edu.cn}
\and
Kai Zhao:  School of Mathematical Sciences, Fudan University, Shanghai 200433, China;
\email{zhao$_-$kai@fudan.edu.cn}}
\subjclass{37J50; 35F21; 35D40}

%%%%%%%%
\begin{abstract}
%We show that for each viscosity solution $u$ of $H(x,u,u_x)=0$, there is a continuous initial data $\varphi$ different from $u$ such that the unique viscosity solution $w_\varphi$ of the Cauchy problem
%\[
%w_t+H(x,w,w_x)=0,\quad w(x,0)=\varphi(x),
%\]
%converges to $u$ in a finite time, that is, 
%$w_\varphi(\cdot,t)=u(\cdot)$ for all $t\geqslant t_0$  for some $t_0>0$.
%Here, $H(x,u,p)$ is strictly decreasing in $u$ and satisfies Tonelli conditions in $p$. 

Suppose that $H(x,u,p)$ is strictly decreasing in $u$ and satisfies Tonelli conditions in $p$. We show that each viscosity solution  of $H(x,u,u_x)=0$ can be reached by many viscosity solutions of 
$$
w_t+H(x,w,w_x)=0,
$$
in a finite time.

%% Keywords are optional
\keywords{Hamilton-Jacobi equations, viscosity solutions, weak KAM theory}
\end{abstract}

%\newpage
%
%\tableofcontents

%\newpage
%%%%%%%%%%%%%%%%%%%%%%%%%%%%%%%%%%%%%%%%%%%%%%%%%%%%%%%%Sect. 1

\section{Introduction}
\setcounter{equation}{0}
\setcounter{footnote}{0}

Let $M$ be a smooth, connected, compact Riemannian manifold without boundary, and $T^*M$ denote the cotangent bundle of $M$. Assume $H:T^*M\times\mathbb{R}\to \mathbb{R}$, $H=H(x,u,p)$, is a $C^3$ function satisfying:
{\bf(H1)} the Hessian $\frac{\partial^2 H}{\partial p^2} (x,u,p)$ is positive definite for all $(x,u,p)\in T^*M\times\R$; {\bf (H2)} for every $(x,u)\in M\times\R$, $H(x,u,p)$ is superlinear in $p$;
{\bf (H3)} there are constants $K_1>0$ and $K_2>0$ such that
	$$
	-K_1 \leqslant \frac{\partial H}{\partial u}(x, u, p)\leqslant-K_2,\quad \forall (x,u,p)\in T^*M\times \R.
	$$
Here, for convenience, we denote $(x,p)\in T^*M$, $u\in\R$, by $(x,u,p)\in T^*M\times\R$.

The notion of viscosity solutions of scalar nonlinear first order Hamilton-Jacobi equations was introduced by Crandall, Evans and Lions \cite{CL}, \cite{CEL}. 
In this paper we aim to understand the long-time behavior of viscosity solutions of 
\begin{equation}\label{CP}\tag{CP}
\begin{cases}
w_t+H(x,w,w_x)=0,\quad (x,t)\in M\times(0,+\infty),\\
w(x,0)=\varphi(x), \quad x\in M,
\end{cases}
\end{equation}
where $\varphi\in C(M,\mathbb{R})$ is the initial data. More precisely, for each viscosity solution $u$ of 
\begin{equation}\label{HJ}\tag{HJ}
H(x,u,u_x)=0,\quad x\in M,
\end{equation}
we are interested in whether there is  $\varphi\in C(M,\mathbb{R})$ different from $u$ such that the unique viscosity solution $w_{\varphi}$ of \eqref{CP} converges to $u$ in a finite time. There have been various literatures dealing with long-time behavior of viscosity solutions of evolutionary Hamilton-Jacobi equations, where convergence results like 
\[
\lim_{t\to +\infty}\big(w_\varphi(x,t)+ct\big)=u(x)\quad \text{for some constant} \ c,
\] 
were proved under various different assumptions, see for instance \cite{Ishii} and the references therein. In contrast, the novelty of this work is the finite-time convergence.

Before stating our main results, we need to introduce some preliminaries.

\medskip
{\bf 1.a.} -- {\it Solution semigroups and  viscosity solutions}. 
Let us recall two  semigroups of operators introduced in \cite{WWY1}.  Define a family of nonlinear operators $\{T^-_t\}_{t\geqslant 0}$ from $C(M,\mathbb{R})$ to itself as follows. For each $\varphi\in C(M,\mathbb{R})$, denote by $(x,t)\mapsto T^-_t\varphi(x)$ the unique continuous function on $ (x,t)\in M\times[0,+\infty)$ such that
\[
T^-_t\varphi(x)=\inf_{\gamma}\left\{\varphi(\gamma(0))+\int_0^tL(\gamma(\tau),T^-_\tau\varphi(\gamma(\tau)),\dot{\gamma}(\tau))d\tau\right\},
\]
where the infimum is taken among the absolutely continuous curves $\gamma:[0,t]\to M$ with $\gamma(t)=x$.  It was also proved  in \cite{WWY1} that $\{T^-_t\}_{t\geqslant 0}$ is a semigroup of operators and the function $(x,t)\mapsto T^-_t\varphi(x)$ is a viscosity solution of \eqref{CP}. Thus, we call $\{T^-_t\}_{t\geqslant 0}$ the backward solution semigroup.

Similarly, one can define another semigroup of operators $\{T^+_t\}_{t\geqslant 0}$, called the forward solution semigroup, by
\begin{equation*}\label{fixufor}
T^+_t\varphi(x)=\sup_{\gamma}\left\{\varphi(\gamma(t))-\int_0^tL(\gamma(\tau),T^+_{t-\tau}\varphi(\gamma(\tau)),\dot{\gamma}(\tau))d\tau\right\},
\end{equation*}
where the supremum is taken among the absolutely continuous curves $\gamma:[0,t]\to M$ with $\gamma(0)=x$.

$T^-_t\varphi(x)$ and $T^+_t\varphi(x)$ can be represented by \cite{WWY1}:
\[
T^-_t\varphi(x)=\inf_{y\in M}h_{y,\varphi(y)}(x,t),\quad  T^+_t\varphi(x)=\sup_{y\in M}h^{y,\varphi(y)}(x,t),\quad (x,t)\in M\times(0,+\infty),
\]
respectively. Here, the continuous functions $h_{\cdot,\cdot}(\cdot,\cdot):M\times\R\times M\times(0,+\infty)\to\R,\ (x_0,u_0,x,t)\mapsto h_{x_0,u_0}(x,t)$ and $h^{\cdot,\cdot}(\cdot,\cdot):M\times\R\times M\times(0,+\infty)\to\R,\ (x_0,u_0,x,t)\mapsto h^{x_0,u_0}(x,t)$ were introduced in \cite{WWY}, called forward and backward implicit action functions respectively.   For any $(x_0,u_0,u,x,t)\in M\times\R\times\R\times M\times(0,+\infty)$, the following relation
\[
h_{x_0,u_0}(x,t)=u\quad \text{if and only if}\quad h_{x,u}(x_0,t)=u_0
\]
holds true. Given $x_0\in M$, $u_1$, $u_2\in\mathbb{R}$, 
if $u_1<u_2$, then $h_{x_0,u_1}(x,t)<h_{x_0,u_2}(x,t)$ and  $h^{x_0,u_1}(x,t)<h^{x_0,u_2}(x,t)$, for all $(x,t)\in M\times (0,+\infty)$. See \cite{WWY,WWY1,WWY2} for more properties of implicit action functions.

\medskip
{\bf 1.b.} -- {\it Weak KAM solutions}.
Following Fathi \cite{Fat-b,Fat-icm}, one can define backward and forward weak KAM solutions of equation \eqref{HJ}, and prove that backward weak KAM solutions and viscosity solutions are the same under assumptions imposed in this paper. Moreover, $u$ is a backward weak KAM solution if and only if $T^-_tu=u$ for all $t\geqslant 0$, and $u$ is a forward weak KAM solution if and only if $T^+_tu=u$ for all $t\geqslant 0$.
See \cite{WWY2,WWY3} for more details.

\medskip
{\bf 1.c.} -- {\it Solvability}. Let $F(x,u,p):=H(x,-u,-p)$. Then $F$ satisfies Tonelli conditions in $p$ and is strictly increasing in $u$. It is a well known fact that 
\begin{equation}\label{F}
F(x,u,Du)=0
\end{equation}
has a unique viscosity solution (or equivalently, backward weak KAM solution). Moreover, \eqref{F} admits at least a forward weak KAM solution. By the relation of weak KAM solutions of \eqref{F} and \eqref{HJ}: $u$ is a backward (resp. forward) weak KAM solution of \eqref{HJ} if and only if $-u$ is a forward  (resp. backward) weak KAM solution of \eqref{F}, it is clear that the set $\mathcal{S}$ of all viscosity solutions (or equivalently, backward weak KAM solutions) of \eqref{HJ} is non-empty, and the forward weak KAM solution of \eqref{HJ} is unique, denoted by $u_+$. Readers can find all the above results in \cite{WWY2}.

\medskip
{\bf 1.d.} -- {\it Decompositions of $C(M,\R)$}.
\begin{align*}
A\ \ &:=\big\{\varphi\in C(M,\R): \min_{x\in M}(\varphi(x)-u_+(x))=0  \big\}, \\
A_+&:=\big\{\varphi\in C(M,\R): \min_{x\in M}(\varphi(x)-u_+(x))>0  \big\}, \\
A_-&:=\big\{\varphi\in C(M,\R): \min_{x\in M}(\varphi(x)-u_+(x))<0 \big\}.
\end{align*}
It is obvious that $C(M,\R)=A \cup A_+ \cup A_-$.

Under assumptions (H1)-(H3), it was proved in \cite[Main Result 2 (1)]{WWY3} that
\[
T_t^- A\subset A,\quad T_t^- A_+\subset A_+, \quad T_t^- A_-\subset A_-,\quad \forall t\geqslant 0.
\]
Moreover, one can deduce that
\begin{itemize}
	\item $\varphi \in A$  if and only if $T_t^- \varphi(x)$ is bounded on $M\times [0,+\infty)$;
	\item $\varphi \in A_+$ if and only if $\lim_{t\to+\infty}T_t^- \varphi(x)=+\infty$ uniformly in $x\in M$;
	\item  $\varphi \in A_-$ if and only if $\lim_{t\to+\infty}T_t^- \varphi(x)=-\infty$ uniformly in $x\in M$. 
\end{itemize}

In view of the above arguments, it is clear that
\[
T^-_tA\subset T^-_sA,\quad \forall s\leqslant t.
\]  
Let 
\[
A_t:=T^-_tA,\quad \forall t\geqslant 0,
\]
and 
\[
A_\infty:=\bigcap_{t\geqslant 0}T^-_tA.
\] 
In view of $\mathcal{S}=\{u\in C(M,\R):T^-_tu=u,\ \forall t\geqslant0\}$, we get that 
$\mathcal{S}\subset A_\infty$.

\medskip
{\bf 1.e.} -- {\it Aubry sets}. For any $\varphi\in A$, define
\[
I_\varphi:=\{x\in M:\varphi(x)=u_+(x)\},
\]
where $u_+$ is the aforementioned unique forward weak KAM solution of \eqref{HJ}.  
Let $u\in\mathcal{S}$. Then $T^-_tu=u$ for all $t\geqslant0$. Thus, one can deduce that $u\in A$. By \cite[Theorem 1.2]{WWY2}, we have
\[
u_+\leqslant u_-\leqslant u\quad \text{everywhere},
\] 
where $u_-$ is the smallest viscosity solution of \eqref{HJ} in the sense of 
\[
u_-(x)=\min_{u\in \mathcal{S}}u(x),\quad \forall x\in M. 
\]
So, it is clear that 
\[
I_u\subset I_{u_-},
\]
where $I_{u_-}$ was called the projected Aubry set in \cite{WWY2}. For any $x\in I_{u_-}$, there is a global calibrated curve passing through it.

\medskip

For any $u\in\mathcal{S}$, let
\[
A_u:=\{\varphi\in A: I_u\subset I_\varphi\}.
\]
It  is  easy to see that $u_- \in A_u$ for any $u\in \mathcal{S}$.

\medskip
{\bf 1.f.} -- {\it Main results}.
Now we are in a position to state our first main result. 
\begin{theorem}\label{th1}
	Let $u\in\mathcal{S}$ and $\varphi\in A_u$. For any $\epsilon>0$, there is $\varphi_\epsilon\in A_u$ with $\|\varphi_\epsilon-\varphi\|_\infty<\epsilon$ such that
	\[
	w_{\varphi_{\epsilon}}(\cdot,t)=u(\cdot),\quad \forall t\geqslant t_0,
	\]
	where $\varphi_\epsilon$ depends on $u$, $\varphi$, $\epsilon$, and $t_0>0$ is a constant depending on  $u$, $\varphi$, $\epsilon$ and $\varphi_\epsilon$.
\end{theorem}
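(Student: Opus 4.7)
The plan is to construct $\varphi_\epsilon$ as a small modification of $\varphi$ localized near the projected Aubry set $I_u$. The key ingredients are the representation $T^-_t\psi(x)=\inf_y h_{y,\psi(y)}(x,t)$, the stationarity $T^-_t u=u$, and the rigidity $\varphi=u$ on $I_u$ that follows from $I_u\subset I_\varphi$ together with $u=u_+$ on $I_u$.

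Since $\varphi=u$ on $I_u$ and both are continuous on the compact manifold $M$, there are nested neighborhoods $V\subset U$ of $I_u$ with $|\varphi-u|<\epsilon/2$ on $U$. Define $\varphi_\epsilon$ to equal $u$ on $V$, equal $\varphi$ off $U$, and interpolate continuously on $U\setminus V$ between $u$ and $\varphi$; then $\|\varphi_\epsilon-\varphi\|_\infty<\epsilon$. Membership $\varphi_\epsilon\in A_u$ follows because $\varphi_\epsilon\geqslant u_+$ pointwise (on $V$ from $u\geqslant u_+$, off $U$ from $\varphi\geqslant u_+$, on $U\setminus V$ by interpolation) and $\varphi_\epsilon=u_+$ on $I_u\subset V$, so $I_u\subset I_{\varphi_\epsilon}$.

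To verify $T^-_{t_0}\varphi_\epsilon=u$ I would choose $t_0$ large enough that, by a Fathi-type localization of backward calibrating curves on a compact manifold, for each $x\in M$ there is $y^*(x,t_0)\in V$ with $h_{y^*,u(y^*)}(x,t_0)=u(x)$. Then the upper bound
\[
T^-_{t_0}\varphi_\epsilon(x)\leqslant h_{y^*,\,\varphi_\epsilon(y^*)}(x,t_0)=h_{y^*,\,u(y^*)}(x,t_0)=u(x)
\]
is immediate because $\varphi_\epsilon=u$ on $V$. The matching lower bound should follow from combining the monotonicity of $h_{y,\cdot}(x,t_0)$, the uniform gap $h_{y,u(y)}(x,t_0)-u(x)\geqslant\eta(U)>0$ for $y\in M\setminus U$ (a weak KAM estimate quantifying non-calibration away from $I_u$), and the pointwise control $|\varphi_\epsilon-u|<\epsilon/2$ on $U\setminus V$, ensuring no competitor $y$ drops the infimum below $u(x)$. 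Once $T^-_{t_0}\varphi_\epsilon=u$ is established, the semigroup property together with $T^-_t u=u$ extends equality to all $t\geqslant t_0$.

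The main obstacle is the lower bound, specifically the contribution of competitors $y\in M\setminus U$: for $\varphi\in A_u$ there is no a priori control on $u-\varphi=u-\varphi_\epsilon$ there, and the extreme case $\varphi=u_-$ with $u\neq u_-$ shows $\varphi$ can lie strictly below $u$ throughout $M\setminus I_u$, while by (H3) the Lipschitz dependence of $h_{y,\cdot}(x,t_0)$ on its second argument grows like $e^{K_1 t_0}$. Balancing the shrinking gap $\eta(U)$ against this growing Lipschitz constant, and against the required sup-norm closeness $\|\varphi_\epsilon-\varphi\|_\infty<\epsilon$, is the technical heart of the proof. I expect the resolution to use the fine structure of long-time convergence in $A$ from \cite{WWY3}, possibly combined with a forward-semigroup envelope (the interval $[u,T^+_{t_0}u]$ always lies inside the $T^-_{t_0}$-preimage of $u$, since $T^+_{t_0}u\geqslant u$ and $T^-_{t_0}T^+_{t_0}u\leqslant u$) to absorb the far-field contributions.
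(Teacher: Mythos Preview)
Your overall architecture---make $\varphi_\epsilon$ agree with $u$ on a neighborhood of $I_u$, then show the infimum in $T^-_t\varphi_\epsilon(x)=\inf_y h_{y,\varphi_\epsilon(y)}(x,t)$ localizes to that neighborhood---matches the paper. But the lower bound, which you flag as the main obstacle, is a genuine gap in your proposal, and the fixes you suggest (balancing a shrinking gap $\eta(U)$ against the $e^{K_1 t_0}$ Lipschitz constant, or using a forward-semigroup envelope $[u,T^+_{t_0}u]$) do not work: the first fails because the expansion estimate goes the wrong way---by (H3) one has $|h_{y,a}(x,t)-h_{y,b}(x,t)|\geqslant e^{K_2 t}|a-b|$, so if $\varphi_\epsilon(y)<u(y)$ the competitor $h_{y,\varphi_\epsilon(y)}(x,t)$ sits \emph{exponentially far below} $h_{y,u(y)}(x,t)$, not within a controllable margin; the second requires $\varphi_\epsilon\geqslant u$ globally, which your $\varphi_\epsilon$ (equal to $\varphi$ off $U$) need not satisfy.

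The missing idea is to compare $\varphi_\epsilon$ not with $u$ but with $u_+$. First, tweak the construction so that $\varphi_\epsilon>u_+$ \emph{strictly} on $M\setminus O_\epsilon$; your choice $\varphi_\epsilon=\varphi$ off $U$ can fail this when $I_\varphi\supsetneq I_u$, but replacing $\varphi$ by $\varphi+\epsilon/2$ outside the neighborhood (and interpolating) fixes it while keeping $\|\varphi_\epsilon-\varphi\|_\infty<\epsilon$. Now set $\delta:=\min_{M\setminus O_\epsilon}(\varphi_\epsilon-u_+)>0$. Since $u_++\delta\in A_+$, the decomposition in \S1.d gives $T^-_t(u_++\delta)\to+\infty$ uniformly, whence
\[
\inf_{y\in M\setminus O_\epsilon} h_{y,\varphi_\epsilon(y)}(x,t)\;\geqslant\;\inf_{y\in M} h_{y,u_+(y)+\delta}(x,t)\;=\;T^-_t(u_++\delta)(x)\;\longrightarrow\;+\infty.
\]
For $t$ large this exceeds the bounded quantity $T^-_t\varphi_\epsilon(x)$ (bounded because $\varphi_\epsilon\in A$), so the infimum over $M$ is attained in $O_\epsilon$, where $\varphi_\epsilon=u$. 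The same blow-up argument, applied with $\sigma:=\min_{M\setminus O_\epsilon}(u-u_+)>0$, shows $u(x)=\inf_{y\in O_\epsilon}h_{y,u(y)}(x,t)$ for large $t$, and the two localized infima coincide. This replaces your delicate balancing act with a single robust use of the $A_+$ dichotomy.
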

As pointed out in Theorem \ref{th1}, the finite time $t_0$ depends on the initial data. We can also provide the following result where the first reach time  is uniform with respect to initial data.

\medskip

\begin{theorem}\label{th2}
Let $K_2$ be as in {\bf (H3)}.	For any $ \epsilon>0 $ , 
	\begin{align*}
	\mathcal{S}   \subset A_\infty  \subset T_t^-(B_\epsilon(u_+)), \quad \forall t \geqslant \max \bigg \{ \frac{1}{K_2} \ln \frac{ C_1 +1+\|u_+ \|_\infty }{\epsilon},1 \bigg \}, 
	\end{align*}
	where $B_\epsilon (u_+):=\{u\in A,\|u-u_+ \|_\infty<\epsilon\}$, and the constant $C_1>0$ depends only on $u_+$. 
\end{theorem}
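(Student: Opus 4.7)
The inclusion $\mathcal{S}\subset A_\infty$ is already noted in Section~1.d, so the substantive content is $A_\infty\subset T_t^-(B_\epsilon(u_+))$. My plan is the following: given $\psi\in A_\infty$, take $\varphi:=T_t^+\psi$ as the candidate preimage and verify three properties: (i)~$T_t^-\varphi=\psi$; (ii)~$\varphi\in A$; and (iii)~$\|\varphi-u_+\|_\infty<\epsilon$ for the stated range of $t$.

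For (i), I would first record the elementary inequalities $T_t^+T_t^-\leqslant\mathrm{id}$ and $T_t^-T_t^+\geqslant\mathrm{id}$, which follow by testing the infimum/supremum in the definitions of $T_t^\mp$ against a common curve. Since $\psi\in A_\infty\subset T_t^-A$, one can write $\psi=T_t^-\tilde\varphi$ with $\tilde\varphi\in A$. Applying $T_t^+$ yields $T_t^+\psi=T_t^+T_t^-\tilde\varphi\leqslant\tilde\varphi$, and then the monotone $T_t^-$ gives $T_t^-T_t^+\psi\leqslant T_t^-\tilde\varphi=\psi$; combined with $T_t^-T_t^+\geqslant\mathrm{id}$, this produces the identity $T_t^-T_t^+\psi=\psi$, which is (i). For~(ii), monotonicity of $T_t^+$ and $T_t^+u_+=u_+$ give $\varphi\geqslant u_+$; if $\min(\varphi-u_+)>0$ then $\varphi\in A_+$, so $T_t^-\varphi\to+\infty$ uniformly by Section~1.d, contradicting $T_t^-\varphi=\psi\in A$. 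Hence $\min(\varphi-u_+)=0$, i.e.\ $\varphi\in A$.

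The crux is~(iii). Because $\partial_u H\leqslant-K_2$, the forward semigroup $T_t^+$ is a strict contraction,
\[
\|T_t^+\psi_1-T_t^+\psi_2\|_\infty\leqslant e^{-K_2 t}\|\psi_1-\psi_2\|_\infty,\qquad \psi_1,\psi_2\in C(M,\mathbb{R}),
\]
which is the analogue for $T_t^+$ of the contraction enjoyed by the backward solution semigroup of the conjugate Hamiltonian $F$ (cf.~Section~1.c) and can be justified in the viscosity sense by a standard maximum-principle / doubling-of-variables argument. Taking $\psi_2=u_+$ and using $T_t^+u_+=u_+$ then gives $\|\varphi-u_+\|_\infty\leqslant e^{-K_2 t}\|\psi-u_+\|_\infty$. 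To close the estimate I need a uniform a priori bound $\|\psi-u_+\|_\infty\leqslant C_1+1+\|u_+\|_\infty$ on $A_\infty$, with $C_1$ depending only on $u_+$. I expect this to come from the identity $\psi=T_1^-(T_1^+\psi)$ (the $t=1$ case of (i)): the solution semigroup regularises any continuous initial data to a uniform equi-Lipschitz class after time~$1$, so $\psi$ is Lipschitz with a constant $L_0$ independent of $\psi$; combined with $\psi\geqslant u_+$ and $\min(\psi-u_+)=0$, this controls $\|\psi-u_+\|_\infty$ in terms of $L_0$, $\mathrm{Lip}(u_+)$ and $\mathrm{diam}(M)$. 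This regularisation is what forces the auxiliary condition $t\geqslant 1$.

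The main technical difficulty is the uniform a priori bound on $A_\infty$ (the precise value of $C_1$), together with the careful viscosity-theoretic justification of the contraction of $T_t^+$ and of the identity $T_t^-T_t^+\psi=\psi$ on $A_\infty$. Once these are in hand, for any $t\geqslant\max\bigl\{\tfrac{1}{K_2}\ln\tfrac{C_1+1+\|u_+\|_\infty}{\epsilon},\,1\bigr\}$ one has $\|\varphi-u_+\|_\infty<\epsilon$ with $\varphi\in A$ and $T_t^-\varphi=\psi$, so $\psi\in T_t^-(B_\epsilon(u_+))$, completing the proof.
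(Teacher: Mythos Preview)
Your approach is correct and genuinely different from the paper's. The paper never invokes $T_t^+$ at all: for $\varphi\in A_1$ it builds an explicit preimage by truncation, setting $\varphi_\epsilon:=\min\{\varphi,\,u_++\epsilon\}$ (so $\varphi_\epsilon\in B_\epsilon(u_+)$ by construction), and then uses the exponential \emph{expansion} of $T_t^-$ (their Lemma~3.1/Corollary~3.2, $|h_{x_0,u}-h_{x_0,v}|\geqslant e^{K_2t}|u-v|$) to show that for $t\geqslant T_0$ the infimum defining $T_t^-\varphi$ and $T_t^-\varphi_\epsilon$ is attained on $\{\varphi<u_++\epsilon\}$, whence $T_t^-\varphi_\epsilon=T_t^-\varphi$. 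Your route is the dual one: take $\varphi:=T_t^+\psi$, use the identity $T_t^-T_t^+\psi=\psi$ on $A_\infty$, and exploit the \emph{contraction} of $T_t^+$ (which, as you note, is exactly the expansion of $h_{x_0,\cdot}$ read through the bijection $h_{x_0,u_0}(x,t)=u\Leftrightarrow h^{x,u}(x_0,t)=u_0$). The paper's argument is more constructive and avoids the semigroup identity $T_t^-T_t^+=\mathrm{id}$ on $A_\infty$; yours is shorter and more conceptual once that identity and the contraction of $T_t^+$ are in hand.

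Two small points. First, your justification of the uniform bound on $A_\infty$ is more delicate than necessary and potentially circular: equi-Lipschitz regularisation of $T_1^-$ in the $u$-dependent case generally requires an a~priori $L^\infty$ bound on the data, which is what you are trying to prove. The paper's Lemma~3.4 gives the bound directly and without Lipschitz estimates: for $\varphi\in A$ one has $u_+\leqslant T_1^-\varphi$ and $T_1^-\varphi(x)\leqslant h_{y,u_+(y)}(x,1)$ for any $y\in I_\varphi\neq\emptyset$, so $\|T_1^-\varphi\|_\infty\leqslant\sup_{y,x}h_{y,u_+(y)}(x,1)=:C_1$; since $A_\infty\subset A_1$, this yields $\|\psi\|_\infty\leqslant C_1$ and hence $\|\psi-u_+\|_\infty\leqslant C_1+\|u_+\|_\infty$, exactly what your contraction step needs. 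Second, in (ii) your contradiction ``$T_t^-\varphi\to+\infty$ versus $T_t^-\varphi=\psi$'' mixes a limit in $t$ with a fixed-$t$ identity; it is cleanest to note instead that $u_+\leqslant\varphi=T_t^+\psi\leqslant\tilde\varphi$ with $\tilde\varphi\in A$, so $\varphi$ touches $u_+$ wherever $\tilde\varphi$ does, giving $\varphi\in A$ immediately.
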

This result means that each viscosity solution of \eqref{HJ} can be reached by $T_t^-(\cdot)$ from a neighbourhood of the unique forward weak KAM solution $u_+$ in a uniform finite time $T_0$, where $T_0$ depends only on $u_+$ and the neighbourhood.

Our tools come from some dynamical results on the Aubry-Mather theory and the weak KAM theory for contact Hamiltonian systems \cite{WWY,WWY1,WWY2,WWY3}, where
variational principles \cite{WWY,CCJWY,CCWY} played essential roles.

\medskip
{\bf 1.g.} -- {\it List of symbols}.

\begin{itemize}
	\item $C(M,\R)$: space of continuous functions on $M$
	\item $\|\cdot\|_\infty$: the supremum norm on $C(M,\R)$
	\item $u_+$: the unique forward weak KAM solution of \eqref{HJ} 
	\item $\mathcal{S}$: the set of all viscosity solutions (or equivalently, backward weak KAM soluiotns) of \eqref{HJ}
	\item $\{T_t^\pm\}$: forward and backward solution semigroups associated with $H$
    \item $w_\varphi$: the unique viscosity solution of \eqref{CP}
	
\end{itemize}

The rest of this paper is organized as follows. We prove Theorem \ref{th1} in Section 2. The proof of Theorem \ref{th2} is given in Section 3.

\section{Finite-time convergence}
This section is devoted to the proof of Theorem \ref{th1}.

\begin{proof}[Proof of Theorem \ref{th1}]
Let $\varphi\in A_u$. Since $u\geqslant u_+$ everywhere, by the definition of $I_u$, for any $\epsilon>0$, there are an open neighbourhood $O_\epsilon$ of $I_u$ and $\varphi_\epsilon\in A$, such that: (i) 
$\varphi_\epsilon(x)=u(x)$, $\forall x\in O_\epsilon$; (ii) $\varphi_\epsilon(x)>u_+(x)$, $\forall x\in M\backslash O_\epsilon$; (iii) $\|\varphi_\epsilon-\varphi\|_\infty<\epsilon$. Here, $\|\cdot\|_\infty$ denotes the supremum norm.
Note that $I_u=I_{\varphi_\epsilon}$ and thus $\varphi_\epsilon\in A_u$.

{\bf Step 1}: We aim to show that there is $t_1>0$ such that 
\begin{align}\label{2-1}
T^-_t\varphi_\epsilon(x)=\inf_{y\in O_\epsilon}h_{y,\varphi_\epsilon(y)}(x,t),\quad (x,t)\in M\times[t_1,+\infty).
\end{align}
Let $\delta:=\min_{x\in M\backslash O_\epsilon}(\varphi_\epsilon(x)-u_+(x))$. Then by (ii) $\delta>0$ is well defined. Let $u_\delta:=u_++\delta$. Then $\varphi_\epsilon(x)\geqslant u_\delta(x)$ for all $x\in M\backslash O_\epsilon$. Recall that $\varphi_\epsilon\in A_u\subset A$. Thus, there is a constant $K>0$ depending on $\varphi_\epsilon$ such that 
\[
|T^-_t\varphi_\epsilon(x)|\leqslant K,\quad \forall t\geqslant 0,\ \forall x\in M.
\] 
Since $u_\delta\in A_+$, then
\[
\lim_{t\to+\infty}T^-_tu_\delta(x)=+\infty,\quad \text{uniformly in}\ x\in M.
\]
So, there is $t_1>0$ such that
\[
T^-_tu_\delta(x)\geqslant K+1,\quad \forall t\geqslant t_1,\ \forall x\in M,
\]
where $t_1$ depends on $\epsilon$, $u$, $\varphi$ and $\varphi_\epsilon$. Hence, for any $t\geqslant t_1$, any $x\in M$, we get that
\begin{align}\label{2-2}
\inf_{y\in M\backslash O_\epsilon}h_{y,\varphi_\epsilon(y)}(x,t)\geqslant \inf_{y\in M}h_{y,\varphi_\epsilon(y)}(x,t)\geqslant \inf_{y\in M}h_{y,u_\delta(y)}(x,t)=T^-_tu_\delta(x)\geqslant K+1.	
\end{align}
The second inequality in \eqref{2-2} comes from the monotonicity property of implicit action functions: $v_1\leqslant v_2$ implies $h_{x,v_1}(y,t)\leqslant h_{x,v_2}(y,t)$ for all $(x,y,t)\in M\times M\times (0,+\infty)$. Hence, for any $t\geqslant t_1$, any $x\in M$, by \eqref{2-2}, we have
\[
T^-_t\varphi_\epsilon(x)=
\inf_{y\in M}h_{y,\varphi_\epsilon(y)}(x,t)=\min\big\{\inf_{y\in O_\epsilon}h_{y,\varphi_\epsilon(y)}(x,t),\min_{y\in M\backslash O_\epsilon}h_{y,\varphi_\epsilon(y)}(x,t)\big\}
=\inf_{y\in O_\epsilon}h_{y,\varphi_\epsilon(y)}(x,t).
\]
Thus, \eqref{2-1} holds true.

{\bf Step 2}: Next we show that for above $O_\epsilon$, there is $t_2>0$ such that
\begin{align}\label{2-3}
	u(x)=\inf_{y\in O_\epsilon}h_{y,u(y)}(x,t),\quad \forall t\geqslant t_2,\ \forall x\in M.
\end{align}
Let $\sigma:=\min_{x\in M\backslash O_\epsilon}(u(x)-u_+(x))$. Then by the definition of $I_u$, $\sigma>0$ is well defined. Let $u_\sigma:=u_++\sigma$. Then $u_\sigma\in A_+$ and $u(x)\geqslant u_\sigma (x)$, $\forall x\in M\backslash O_\epsilon$. Thus, 
\[
\lim_{t\to+\infty}T^-_tu_\sigma(x)=+\infty,\quad \text{uniformly in}\ x\in M.
\]
Hence, there is $t_2>0$ such that
\[
T^-_tu_\sigma(x)\geqslant \|u\|_\infty+1,\quad \forall t\geqslant t_2,\ \forall x\in M,
\]
where $t_2>0$ depends on $u$ and $\epsilon$. 
Thus, for any $t\geqslant t_2$ and any $x\in M$, we get 
\begin{align}\label{2-4}
	\inf_{y\in M\backslash O_\epsilon}h_{y,u(y)}(x,t)\geqslant \inf_{y\in M\backslash O_\epsilon}h_{y,u_\sigma(y)}(x,t)\geqslant \inf_{y\in M}h_{y,u_\sigma(y)}(x,t)=T^-_tu_\sigma(x)\geqslant \|u\|_\infty+1.
\end{align}
Since $T^-_tu=u$ for all $t\geqslant 0$, then
\[
 u(x)=T^-_tu(x)=\inf_{y\in M}h_{y,u(y)}(x,t),\quad \forall t\geqslant 0,\ \forall x\in M.
\]
Hence, for any $t\geqslant t_2$ and any $x\in M$, by \eqref{2-4}, we get 
\begin{align}\label{2-5}
	u(x)=\inf_{y\in M}h_{y,u(y)}(x,t)=\min\big\{\inf_{y\in O_\epsilon}h_{y,u(y)}(x,t),\inf_{y\in M\backslash O_\epsilon}h_{y,u(y)}(x,t)\big\}=\inf_{y\in O_\epsilon}h_{y,u(y)}(x,t).
\end{align}

{\bf Step 3}: Let $t_0:=\max\{t_1,t_2\}$. Then by \eqref{2-1} and \eqref{2-5}, we obtain that
\[
T^-_t\varphi_\epsilon(x)=\inf_{y\in O_\epsilon}h_{y,\varphi_\epsilon(y)}(x,t)=\inf_{y\in O_\epsilon}h_{y,u(y)}(x,t)=u(x),
\]
for all $t\geqslant t_0$ and all $x\in M$.

\end{proof}

\section{Uniform finite-time convergence}

\begin{lemma}\label{le1}
For each $x$, $x_0\in M$, $t>0$, $u$, $v\in \R$, there holds	
\[
|h_{x_0,u}(x,t)-h_{x_0,v}(x,t)| \geqslant e^{K_2 t} |u-v|.
\]
\end{lemma}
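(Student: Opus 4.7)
The plan is to exploit the variational characterization of implicit action functions together with a Grönwall-type comparison for the contact Lagrangian ODE $\dot u(\tau)=L(\gamma(\tau),u(\tau),\dot\gamma(\tau))$, using that the assumption \textbf{(H3)} on $H$ translates (via Legendre duality, so that $\partial_u L=-\partial_u H$) into the uniform lower bound $\partial_u L\geqslant K_2$. Without loss of generality I take $u<v$, so that by the monotonicity property recalled in Section 1.a one has $h_{x_0,u}(x,t)<h_{x_0,v}(x,t)$, and it suffices to prove $h_{x_0,v}(x,t)-h_{x_0,u}(x,t)\geqslant e^{K_2 t}(v-u)$.

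First I would fix a minimizing base curve $\gamma:[0,t]\to M$ with $\gamma(0)=x_0$, $\gamma(t)=x$ that realizes $h_{x_0,v}(x,t)$, in the sense that $b(\tau):=h_{x_0,v}(\gamma(\tau),\tau)$ satisfies $\dot b(\tau)=L(\gamma(\tau),b(\tau),\dot\gamma(\tau))$ with $b(0)=v$ and $b(t)=h_{x_0,v}(x,t)$; the existence of such a $\gamma$ is provided by the variational principles of \cite{WWY,CCJWY,CCWY}. Along this same $\gamma$, I would define $\tilde a:[0,t]\to\R$ as the solution of $\dot{\tilde a}(\tau)=L(\gamma(\tau),\tilde a(\tau),\dot\gamma(\tau))$ with $\tilde a(0)=u$. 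Since the pair $(\gamma,\tilde a)$ is an admissible competitor in the variational definition of $h_{x_0,u}(x,t)$, one gets $h_{x_0,u}(x,t)\leqslant\tilde a(t)$, and therefore
\[
h_{x_0,v}(x,t)-h_{x_0,u}(x,t)\;\geqslant\;b(t)-\tilde a(t).
\]

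Subtracting the two ODEs and applying the mean value theorem in the $u$-variable yields
\[
\frac{d}{d\tau}\bigl(b(\tau)-\tilde a(\tau)\bigr)=\frac{\partial L}{\partial u}\bigl(\gamma(\tau),\xi(\tau),\dot\gamma(\tau)\bigr)\,\bigl(b(\tau)-\tilde a(\tau)\bigr)
\]
for some $\xi(\tau)$ between $\tilde a(\tau)$ and $b(\tau)$. Because $b(0)-\tilde a(0)=v-u>0$ and $\partial_u L\geqslant K_2>0$, Grönwall's inequality forces $b(\tau)-\tilde a(\tau)\geqslant(v-u)e^{K_2\tau}$ on all of $[0,t]$, and in particular at $\tau=t$, which gives the desired inequality.

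The step I expect to need the most care is the very first one: precisely invoking the contact-Lagrangian calibration along a minimizer so that $h_{x_0,v}(\gamma(\tau),\tau)$ actually solves the ODE $\dot b=L(\gamma,b,\dot\gamma)$, together with the sign translation $\partial_u L=-\partial_u H\geqslant K_2$. Once this is set up, the Grönwall comparison is routine and the inequality follows immediately; the case $v<u$ is symmetric, and the case $u=v$ is trivial, so combining all three produces the absolute-value statement of the lemma.
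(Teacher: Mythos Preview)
Your proposal is correct and follows essentially the same strategy as the paper: both choose a minimizer $\gamma$ for $h_{x_0,v}(x,t)$, exploit that $\gamma$ is a competitor for $h_{x_0,u}$, and then run a Gr\"onwall argument using $\partial_u L\geqslant K_2$. The only cosmetic difference is that you introduce the auxiliary ODE solution $\tilde a$ along $\gamma$ and compare $b-\tilde a$, whereas the paper works directly with $F(\tau)=h_{x_0,v}(\gamma(\tau),\tau)-h_{x_0,u}(\gamma(\tau),\tau)$ via the integral inequality it satisfies.
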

\begin{proof}
	By the monotonicity property of $h_{x_0,u}(x,t)$ with respect to $u$, if $u< v$ , then $h_{x_0,u}(x,t)< h_{x_0,v}(x,t)$. Let $\gamma_v$ be a minimizer of $h_{x_0,v}(x,t)$  with $\gamma_v(0)=x_0$ and $\gamma_v(t)=x$. Then, for any $s\in [0,t]$,
	\begin{align}\label{eq:mono h}
	h_{x_0,u}(\gamma_v(s),s) \leqslant h_{x_0,v}(\gamma_v(s),s).
	\end{align}
	In terms of the definition of $h_{x_0,v}(x,t)$ and (H3), we have
	\begin{align*}
	&\, h_{x_0,v}(\gamma_v(s),s)-h_{x_0,u}(\gamma_v(s),s)\\
	\geqslant &\, v-u+ \int_0^s L(\gamma_v(\tau),h_{x_0,v}(\gamma_v(\tau),\tau),\dot \gamma_v(\tau ) )-L(\gamma_v(\tau),h_{x_0,u}(\gamma_v(\tau),\tau),\dot \gamma_v(\tau ) ) \ d \tau\\
	\geqslant &\,  v-u + K_2 \int_0^s h_{x_0,v}(\gamma_v(\tau),\tau)-h_{x_0,u}(\gamma_v(\tau),\tau)  \ d \tau
	\end{align*}
	Let  $F(\tau):=h_{x_0,v}(\gamma_v(\tau),\tau)-h_{x_0,u}(\gamma_v(\tau),\tau) $. It follows from \eqref{eq:mono h} that $F(\tau)>0$ for any $\tau \in(0,t]$.Hence, we have
	$$
	F(s) \geqslant v-u +K_2 \int_0^s F(\tau) d \tau,\quad s\in[0,t].
	$$
	It yields $F(t) \geqslant e^{K_2 t} (v-u)$.
	
	Changing the roles of $u$ and $v$, a quite similar argument completes the proof.
\end{proof}

\begin{corollary}\label{cT- -}
	Let $\varphi$,  $\psi \in C(M,\R)$. If $\varphi>\psi$ everywhere, then 
	$$
	T^-_t \varphi(x)- T^-_t \psi(x) \geqslant e^{K_2 t}\min_{y\in M} \{\varphi(y)- \psi(y) \},\quad \forall (x,t)\in M\times(0,+\infty). 
	$$
\end{corollary}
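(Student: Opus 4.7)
The plan is to bootstrap from Lemma \ref{le1} using the inf-representation $T^-_t\varphi(x)=\inf_{y\in M} h_{y,\varphi(y)}(x,t)$. By compactness of $M$ and continuity of $\varphi-\psi$, the hypothesis $\varphi>\psi$ everywhere yields $m:=\min_{y\in M}(\varphi(y)-\psi(y))>0$, so that $\varphi(y)\geqslant \psi(y)+m$ for every $y\in M$.

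Next, I would apply Lemma \ref{le1} to the pair $(x_0,u,v)=(y,\varphi(y),\psi(y))$: by monotonicity of the implicit action in its scalar argument we have $h_{y,\varphi(y)}(x,t)>h_{y,\psi(y)}(x,t)$, and Lemma \ref{le1} upgrades this to the quantitative estimate
\[
h_{y,\varphi(y)}(x,t)-h_{y,\psi(y)}(x,t)\geqslant e^{K_2 t}\bigl(\varphi(y)-\psi(y)\bigr)\geqslant e^{K_2 t}\,m,
\]
valid for every $y\in M$ and every $(x,t)\in M\times(0,+\infty)$. The key point is that the lower bound $e^{K_2 t}m$ is uniform in $y$, since $m$ is a global minimum.

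Finally, I would take the infimum over $y\in M$ on both sides. Since the constant $e^{K_2 t}m$ is independent of $y$, this gives
\[
T^-_t\varphi(x)=\inf_{y\in M} h_{y,\varphi(y)}(x,t)\geqslant \inf_{y\in M} h_{y,\psi(y)}(x,t)+e^{K_2 t}m=T^-_t\psi(x)+e^{K_2 t}\min_{y\in M}\bigl(\varphi(y)-\psi(y)\bigr),
\]
which is exactly the claim. There is no real obstacle here: the whole corollary is a direct consequence of the uniform (in $x_0,x$) exponential separation given by Lemma \ref{le1}, combined with the compactness of $M$ which turns the strict inequality $\varphi>\psi$ into a uniform positive gap.
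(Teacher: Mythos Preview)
Your argument is correct and follows essentially the same route as the paper: both use the representation $T^-_t\varphi(x)=\inf_{y\in M}h_{y,\varphi(y)}(x,t)$ together with Lemma \ref{le1}. The only cosmetic difference is that the paper first selects a minimizer $y_0\in M$ of $y\mapsto h_{y,\varphi(y)}(x,t)$ and applies Lemma \ref{le1} at that point, whereas you establish the uniform lower bound for every $y$ and then pass to the infimum; the two are equivalent.
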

\begin{proof}
	Recall that for each $t>0$ and each $x\in M$, we have 
	\begin{align}\label{2-2500}
	T_t^- \varphi(x)=\inf_{y\in M}h_{y,\varphi(y)} (x,t), \quad  T_t^- \psi(x)=\inf_{y\in M}h_{y,\psi(y)} (x,t).
	\end{align}
	Note that $h_{y,\varphi(y)}(x,t)$  is continuous in $y$. By the compactness of $M$,
	\[
	T_t^-\varphi(x)=h_{y_0,\varphi(y_0)} (x,t)
	\]
	for some $y_0\in M$.
	 By Lemma \ref{le1} and \eqref{2-2500}, for each $t>0$ and each $x\in M$, we have 
	\begin{align*}
	T_t^- \varphi(x)-T_t^- \psi(x)\geqslant &\, h_{y_0,\varphi(y_0)} (x,t)-h_{y_0,\psi(y_0)} (x,t)\geqslant e^{K_2 t}(\varphi(y_0)-\psi(y_0))
	\geqslant e^{K_2 t} \min_{y\in M} \{\varphi(y)- \psi(y) \} .
	\end{align*}
	The proof is complete.
\end{proof}

\begin{lemma}\label{3-3}
	For each $t\geqslant 0$, $T_t^-u_+\geqslant u_+$ everywhere.
\end{lemma}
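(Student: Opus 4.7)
The plan is to deduce this inequality from the fact that $u_+$ is a forward weak KAM solution, combined with the bijective correspondence between the forward and backward implicit action functions recalled in Section 1.a. Since $u_+$ satisfies $T^+_t u_+ = u_+$ for all $t\geqslant 0$, the representation
\[
u_+(y)=T^+_t u_+(y)=\sup_{z\in M}h^{z,u_+(z)}(y,t)
\]
yields in particular the pointwise inequality $h^{x,u_+(x)}(y,t)\leqslant u_+(y)$ for all $x,y\in M$ and all $t>0$. This is the only analytic input; the rest is a translation between $h$ and $h^{\cdot,\cdot}$.

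To carry out the translation, I would fix $x,y\in M$ and $t>0$ and set $u^*:=h_{y,u_+(y)}(x,t)$. The correspondence ``$h_{y,u_+(y)}(x,t)=u^*\iff h^{x,u^*}(y,t)=u_+(y)$'' then gives
\[
h^{x,u^*}(y,t)=u_+(y)\geqslant h^{x,u_+(x)}(y,t),
\]
and the strict monotonicity of $u\mapsto h^{x,u}(y,t)$ (also recalled in Section 1.a) forces $u^*\geqslant u_+(x)$; that is, $h_{y,u_+(y)}(x,t)\geqslant u_+(x)$. Taking the infimum over $y\in M$ in the representation $T^-_t u_+(x)=\inf_{y\in M}h_{y,u_+(y)}(x,t)$ yields $T^-_t u_+(x)\geqslant u_+(x)$, and the case $t=0$ is trivial since $T^-_0$ is the identity.

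There is no real obstacle here: the lemma is essentially a tautological consequence of the duality between $h_{x_0,u_0}$ and $h^{x_0,u_0}$ together with the forward fixed-point property of $u_+$. The only point requiring a little care is invoking strict monotonicity in the correct variable, so that the comparison $h^{x,u^*}(y,t)\geqslant h^{x,u_+(x)}(y,t)$ really does transfer to $u^*\geqslant u_+(x)$; once this is noted, the whole argument is a couple of lines.
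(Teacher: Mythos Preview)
Your proof is correct and follows essentially the same route as the paper: both arguments use the representation $T^-_tu_+(x)=\inf_{y}h_{y,u_+(y)}(x,t)$, invoke the correspondence $h_{y,u_+(y)}(x,t)=u^*\iff h^{x,u^*}(y,t)=u_+(y)$, compare with $h^{x,u_+(x)}(y,t)\leqslant u_+(y)$ obtained from $u_+=T^+_tu_+=\sup_z h^{z,u_+(z)}(\cdot,t)$, and conclude via monotonicity of $h^{x,\cdot}(y,t)$. The only difference is the order in which these ingredients are assembled.
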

\begin{proof}
	It is clear that $T_0^-u_+=u_+$. For $t>0$,  we have
	\[
	T^{-}_tu_+(x)=\inf_{y\in M}h_{y,u_+(y)}(x,t),\quad \forall x\in M.
	\]
	Thus, in order to prove $T_t^-u_+\geqslant u_+$ everywhere, it is sufficient to show that for each $y\in M$, $h_{y,u_+(y)}(x,t)\geqslant u_+(x)$ for all $(x,t)\in M\times (0,+\infty)$.
	For any given $(x,t)\in M\times (0,+\infty)$, let $v(y):=h_{y,u_+(y)}(x,t)$ for all $y\in M$. Then $u_+(y)=h^{x,v(y)}(y,t)$. Since
	\[
	u_+(y)=T_t^+u_+(y)=\sup_{z\in M}h^{z,u_+(z)}(y,t),
	\]
	which implies $u_+(y)\geqslant h^{x,u_+(x)}(y,t)$, i.e., $h^{x,v(y)}(y,t)\geqslant h^{x,u_+(x)}(y,t)$. By the monotonicity of backward implicit action functions, we have $v(y)\geqslant u_+(x)$ for all $y\in M$, i.e., $h_{y,u_+(y)}(x,t)\geqslant u_+(x)$ for all $y\in M$.
\end{proof}

\begin{lemma}\label{lem:I compact}
	For any given $t>0$, 
	$$
	C_t:=\sup_{\varphi \in A } \|T^-_t\varphi\|_\infty <+\infty
	$$
%	there is a constant $C_t>0$ such that
%	\[
%	\|T^-_t\varphi\|_\infty\leqslant C_t,\quad \forall \varphi\in A,
%	\]
	i.e., $A_t$ is bounded  by $C_t$ .
\end{lemma}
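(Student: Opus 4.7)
\medskip

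The plan is to bound $T^-_t\varphi(x)$ both from above and below uniformly over $\varphi\in A$, using the representation $T^-_t\varphi(x)=\inf_{y\in M}h_{y,\varphi(y)}(x,t)$, the monotonicity of the implicit action function in its second slot, the semigroup monotonicity inherited from it, and Lemma \ref{3-3}.

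For the upper bound, I would exploit the defining feature of $A$: every $\varphi\in A$ touches $u_+$ from above, i.e.\ there exists $y_0=y_0(\varphi)\in M$ with $\varphi(y_0)=u_+(y_0)$. Plugging this particular $y_0$ into the infimum formula gives
\[
T^-_t\varphi(x)\;\leqslant\; h_{y_0,\varphi(y_0)}(x,t)\;=\;h_{y_0,u_+(y_0)}(x,t),
\]
and since $(y,x)\mapsto h_{y,u_+(y)}(x,t)$ is continuous on the compact set $M\times M$ (for fixed $t>0$), it attains a finite maximum $M_1(t)$ depending only on $u_+$ and $t$. This yields a $\varphi$-independent upper bound.

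For the lower bound, I would use $\varphi\geqslant u_+$ pointwise (since $\varphi\in A$ forces $\min(\varphi-u_+)=0$, so $\varphi\geqslant u_+$) together with the monotonicity of the backward semigroup: from $v_1\leqslant v_2$ we get $h_{y,v_1}(x,t)\leqslant h_{y,v_2}(x,t)$ (stated in the preliminaries), whence
\[
T^-_t\varphi(x)=\inf_{y\in M}h_{y,\varphi(y)}(x,t)\;\geqslant\;\inf_{y\in M}h_{y,u_+(y)}(x,t)=T^-_tu_+(x).
\]
Applying Lemma \ref{3-3}, $T^-_tu_+(x)\geqslant u_+(x)\geqslant -\|u_+\|_\infty$, giving the lower bound $-\|u_+\|_\infty$, again independent of $\varphi$.

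Combining the two estimates, $\|T^-_t\varphi\|_\infty\leqslant\max\{|M_1(t)|,\|u_+\|_\infty\}=:C_t<+\infty$ for every $\varphi\in A$, which is the claim. I do not anticipate a serious obstacle: the only conceptual point is recognizing that membership in $A$ provides a touching point $y_0$ that lets one test the infimum against $u_+$ rather than $\varphi$ itself, thereby washing out the size of $\varphi$ above $u_+$; the rest is continuity on a compact set plus Lemma \ref{3-3}.
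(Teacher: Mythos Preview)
Your proof is correct and follows essentially the same route as the paper: the lower bound via $\varphi\geqslant u_+$, semigroup monotonicity, and Lemma~\ref{3-3}; the upper bound by testing the infimum at a point of $I_\varphi$ where $\varphi=u_+$ and then invoking continuity of $(y,x)\mapsto h_{y,u_+(y)}(x,t)$ on the compact $M\times M$. The only cosmetic difference is that the paper bounds via $\inf_{y\in I_\varphi}h_{y,u_+(y)}(x,t)\leqslant\sup_{y\in M}h_{y,u_+(y)}(x,t)$ rather than picking a single touching point, which amounts to the same thing.
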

\begin{proof} 
Since $\varphi\in A $, then $\varphi \geqslant u_+ $ and thus  $T_t^- \varphi \geqslant T_t^- u_+\geqslant u_+$ everywhere  by Lemma \ref{3-3} . 	
	
On the other hand, recall that $I_\varphi=\{x:\varphi(x)=u_+(x)\}$. Then
\[
	T_t^- \varphi(x) = \inf_{y\in M} h_{y,\varphi(y)}(x,t)\leqslant  \inf_{y\in I_\varphi} h_{y,u_+(y)} (x,t)\leqslant \sup_{y\in I_\varphi} h_{y,u_+(y)} (x,t)\leqslant \sup_{y\in M} h_{y,u_+(y)} (x,t), \quad \forall x\in M.
\]
	Since the function $(x_0,u_0,x,s)\mapsto h_{x_0,u_0}(x,s)$ is continuous on $M\times \R\times M\times (0,+\infty)$, then it is bounded on $M\times [-\|u_+\|_\infty,\|u_+\|_\infty]\times M\times\{t\}$, and  thus	
	$$C_t:=\sup_{\psi \in A_t } \| \psi \|_\infty =\sup_{\varphi \in A } \|T^-_t\varphi\|_\infty <+\infty.
	$$
	 where $C_t$ depends only on $t$ and $u_+$.
\end{proof}

\medskip

\begin{proof}[Proof of Theorem \ref{th2}.]
	For any $\varphi \in A_{1}$ and any $ \epsilon>0 $, define $\varphi_\epsilon \in B_\epsilon (u_+)$ by
	$$
	\varphi_\epsilon(x)=\begin{cases}
	\varphi(x),  & x\in  O_\epsilon, \\
	u_+(x)+\epsilon, \quad & x\in M\backslash O_\epsilon,
	\end{cases}
	$$ 
	where $O_\epsilon:=\{x\in M:\varphi (x)<u_+(x)+\epsilon \} $.
By definition, we get 
	$$
	\varphi(x)|_{M\backslash O_\epsilon } \geqslant \varphi_\epsilon(x)|_{M\backslash O_\epsilon }= u_+ (x)|_{M\backslash O_\epsilon }+\epsilon. 
	$$
	Define $u_\epsilon(x):= u_+(x)+\epsilon $ for all $x\in M$. By  Corollary \ref{cT- -},  we have
	$$
	|T^-_t u_\epsilon(x) - T^-_t u_+(x) | \geqslant e^{K_2 t} \min_{y\in M}\{u_\epsilon(y)-u_+(y)\}=e^{K_2 t}\epsilon,\quad \forall t>0, \ \forall x\in M.
	$$
	Since $T^-_t u_\epsilon>T^-_t u_+$ everywhere, we have
	$$
	T^-_t u_\epsilon(x)  \geqslant e^{K_2 t}\epsilon+T^-_t u_+(x), \quad \forall t>0,\ \forall x\in M.
	$$
Set 
	$$
	T_0:= \max\Bigg\{  \frac{1}{K_2} \ln \frac{ C_1 +1+\|u_+ \|_\infty }{\epsilon} , 1 \Bigg\}, 
	$$
	where $C_1$ is as in Lemma \ref{lem:I compact}. 
	For any $t>0$, by the monotonicity property of implicit action functions and the definition of $\varphi_\epsilon$, we get
	\begin{align}\label{2-9}
	\inf_{y\in M\backslash O_\epsilon} h_{y,\varphi (y)}(x, t) \geqslant \inf_{y\in M\backslash O_\epsilon} h_{y,\varphi_\epsilon (y)}(x, t)=\inf_{y\in M\backslash O\epsilon} h_{y,u_\epsilon (y)}(x, t)
	\geqslant \inf_{y\in M}  h_{y,u_\epsilon (y)}(x, t)
	\end{align}
	For any $t>T_0 >1 $, we have
	\begin{align}\label{2-10}
	\begin{split}
&\,\inf_{y\in M}  h_{y,u_\epsilon (y)}(x, t)	=T_t^-u_\epsilon(x)
	\geqslant  T_t^-u_+(x)+e^{K_2 T_0}\epsilon  \\
	 \geqslant &\,  u_+(x)+C_1 +1+\|u_+ \|_\infty\geqslant  \sup_{\varphi \in A } \|T^-_1 \varphi\|_\infty +1 \\
	 \geqslant &\, \sup_{\varphi \in A } \|T^-_t \varphi\|_\infty +1   =C_t +1.
	\end{split}
	\end{align}
Note that $ T_t^- \varphi$, $T_t^- \varphi_\epsilon \in A_t$. Then by \eqref{2-9} and \eqref{2-10},
	$$
	\inf_{y\in M\backslash O_\epsilon } h_{y,\varphi (y)}(x, t) \geqslant C_t+1\geqslant\| T_t^- \varphi \|_\infty+1 \quad \text{and} \quad \inf_{y\in M\backslash O_\epsilon} h_{y,\varphi_\epsilon (y)}(x, t)\geqslant C_t+1\geqslant \| T_t^- \varphi_\epsilon \|_\infty+1.
	$$
	Hence, we get
	\begin{align*}
	T^-_t \varphi_\epsilon(x)= &\,\inf_{y\in M} h_{y,\varphi_\epsilon(y)}(x,t)=\inf_{y\in O_\epsilon} h_{y,\varphi_\epsilon(y)}(x,t)\\
	=&\,\inf_{y\in O_\epsilon} h_{y,\varphi(y)}(x,t)=\inf_{y\in M} h_{y,\varphi(y)}(x,t)=T^-_t \varphi(x)  , \quad \forall t\geqslant T_0,
	\end{align*}
	 which implies that 
	$$
	A_\infty\subset T_{t}^- A_1\subset T_{t}^-(B_\epsilon(u_+)), \quad \forall t\geqslant T_0.
	$$
\end{proof}
\begin{example} Consider the following Hamiltonian
	\[
	H(x,u,p)=-2u+p^2,\quad x\in\mathbb{S},\  p\in\R,\ u\in\R.
	\]
	Here, $\mathbb{S}:=(-\frac{1}{2},\frac{1}{2}]$ denotes the unit circle. 
	The corresponding ergodic Hamilton-Jacobi equation reads
	\begin{align}\label{2-2600}
	-2u+(u')^2=0,\quad x\in\mathbb{S}.
	\end{align}
	Let $u_1$ be the even 1-periodic extension of $\frac{1}{2}x^2$ in $[0,\frac{1}{2}]$. Then it is clear that $u_1$ is a viscosity solution of \eqref{2-2600}. 	
	Note that $u=0$ is a viscosity solution of  
	\[
	2u+(u')^2=0.
	\]
	In view of the uniqueness of viscosity solutions of the above equation, $u_+=0$ is the unique forward weak KAM solution of \eqref{2-2600}. Thus, $I_{u_1}=\{0\}$.
	
	Let $\varphi$ be the even 1-periodic extension of $\frac{1}{2}x^2+x$ in $[0,\frac{1}{2}]$. Then one can deduce that $\varphi\in A_{u_1}$.
	For any given small $\epsilon>0$, define $\varphi_\epsilon$ as the even 1-periodic extension of
	\begin{align*}
\bar{\varphi}_\epsilon(x)=\left\{\begin{array}{ll}
\frac{1}{2}x^2,\quad & x\in[0,\epsilon],\\[2mm]
\frac{1}{2}\epsilon^2+ (\frac{3}{2}\epsilon +2) (x-\epsilon),\quad & x\in[\epsilon,2\epsilon],\\[2mm]
\frac{1}{2}x^2+x,\quad & x\in[2\epsilon,\frac{1}{2}].
\end{array}\right.
	\end{align*}
	Then by the proof of Theorem \ref{th1}, there is a finite time $t_0>0$ such that for all $t\geqslant t_0$,
	\[
	w_{\varphi_\epsilon}(x,t)=u_1(x), \quad \forall x\in M.
	\]
Moreover,for $\varphi\in \rm{Lip}(M,\R)$ similarly with the proof of Theorem \ref{th2}, one obtains the following estimation of a finite time in Theorem \ref{th1},
$$
t_0:= \frac{1}{K_2} \ln \frac{ M_0 +1+\|u_+ \|_\infty }{f(\epsilon)},  
$$
where $M_0:= \sup_{\psi \in \mathcal{S}} \| \psi \|_{\infty}$ and
$$
   f(\epsilon )= \min_{\rm{dist}(x,I_{u})=\epsilon / \max\{\rm{Lip}(\varphi),\rm{Lip}(u) \}  }\{ u(x)-u_+(x) \}.
$$
In this example \eqref{2-2600} , $\|u_+ \|_\infty=0 $ , $M_0=\frac{1}{8}$, $K_2=4 $ and $f(\epsilon)=\frac{2}{9}\epsilon^2$. Hence $t_0=\frac{1}{2} \ln \frac{9}{4\epsilon}  $ for given small $\epsilon$.
%\begin{remark}
%	As we known, by Main result 2 of \cite{WWY3},they have that $\lim_{t \to +\infty}  w_{ \varphi }(x,t)=  u(x)$ for any $x\in M$ .
%\end{remark}

\end{example}

\vskip 1cm

\section*{Acknowledgements} 
Kaizhi Wang is supported by NSFC Grant No. 11771283, 11931016 and by Innovation Program of Shanghai Municipal Education Commission.
Jun Yan is supported by NSFC Grant No.  11631006, 11790273.

\medskip

\end{document}